\DeclareMathOperator{\Ima}{Im}
\newenvironment{proof} 
{\par\noindent{\bf Proof.}} 
{\hfill$\scriptstyle\blacksquare$} 
\newtheorem{theorem}{Theorem}[section]
\newtheorem{lemma}{Lemma}[section]
\newtheorem{proposal}{Proposal}[section]
\newtheorem{definition}{Definition}[section]
\renewcommand\@date{{%
  \vspace{-\baselineskip}%
  \large\centering
  \begin{tabular}{@{}c@{}}
    Andronick Arutyunov\textsuperscript{1} \\
    \normalsize andronick.arutyunov@gmail.com
  \end{tabular}%
  \quad \quad
  \begin{tabular}{@{}c@{}}
    Alekseev Aleksandr\textsuperscript{2} \\
    \normalsize aleksandr.alekseev@frtk.ru
  \end{tabular}

  \bigskip

  \textsuperscript{1}Department of Higher Mathematics, Moscow institute of physics and technology\par
  \textsuperscript{2} Department of Control and Applied Mathematics, Moscow institute of physics and technology

  \bigskip

  \today
}}
\title{Cohomology of n-categories and derivations in group algebras \thanks{This work is supported by grant MK-1938.2017.1}}
\begin{document} 

\maketitle

\section{Abstract}
\quad There is a well-known problem of describing the algebra of the derivations in group algebra. In \cite{litlink4}, a method was proposed for describing derivations by considering the character space of a certain groupoid associated with a given group algebra. This groupoid is a action groupoid generated by the inner automorphisms of the group.

This construction allows some generalizations, which this work is devoted to.

It turns out that a given $ n- $category can be associated with a set of vector spaces, the so-called $ n- $characters, i.e. mappings from the set of $ n- $morphisms of the $ n- $category to complex numbers preserving the composition. We show that the corresponding sequence is exact, that is, in fact, we construct the cohomology of a given $ n- $category. In the particular case when this category is a $ 2- $groupoid associated with a group algebra, the complex gives a description of the derivation algebra in the group algebra.

Within the framework of this paper, we will also propose a natural modification of the construction of internal and external derivations, which significantly encapsulates the study of the space of derivations, taking into account the geometric constructions that we introduce.

\subsection{General definitions}

\quad An important role for us will be played by the notion of the $ n- $category and, in particular, the $ 2- $category (\cite{litlink1}, \cite{litlink3}, p. 312).

We start with the definition of the $ 2- $category.

\begin{definition} \textit {$ 2- $category} $C^{2}$ \textit{- is:}
\begin{itemize}
\item A class $Obj(C^2)$, whose elements are called objects
\item A class $\textbf{Hom}^1 (a, b)$ for every $a$ and $b$, whose elements are called maps, object $a$ is called a source, $b$ is called a target. Every map $\varphi$ can be written like an expression: $\varphi : a \Rightarrow b$.  $\textbf{Hom}^1 (C^2)$ -- a class of the all 1-maps.
\item A binary operation $\circ$ is called composition of maps $\varphi \in$ $\textbf{Hom}^1 (a, b)$ and $\psi \in$ $\textbf{Hom}^1 (b, c)$ and written as $\varphi \circ \psi$ or $\varphi \psi \in$ $\textbf{Hom}^1 (a, c)$, governed by two axioms:
\begin{enumerate}
\item Associativity: If $\varphi : a \Rightarrow b$, $\psi : b \Rightarrow c$ and $\xi : c \Rightarrow d$ then $\psi \circ (\xi \circ \varphi) = (\psi \circ \xi) \circ \varphi$, and
\item Identity: For every object $a$, there exists a map $1_a : a \Rightarrow a$ called the identity map for $a$, such that for every map $\varphi : a \Rightarrow b$, we have $1_b \circ \varphi = \varphi = \varphi \circ 1_a$.
\end{enumerate}
\item A class $\textbf{Hom}^2 (\varphi, \psi)$ for every  $\varphi, \psi \in$ $\textbf{Hom}^1 (a, b)$, whose elements are called 2-maps. Every 2-map $\alpha$ can be written like an expression: $\alpha : \varphi \Rightarrow \psi$ 

\begin{equation*}
\begin{tikzcd}
A \arrow[r, bend left=50, ""{name=U, below}, "\varphi"]
\arrow[r, bend right=50, ""{name=D}, swap, "\psi"]
& B
\arrow[Rightarrow, from=U, to=D, "\alpha"]
\end{tikzcd}
\end{equation*}

\item A binary operation $\bullet$ is called vertical composition of 2-maps $\alpha : \varphi \Rightarrow \psi$ and $\beta : \psi \Rightarrow \xi$ and written as $\alpha \bullet \beta : \varphi \Rightarrow \xi$, governed by two axioms:
\begin{enumerate}
\item Associativity: If $\alpha : \varphi \Rightarrow \psi$, $\beta : \psi \Rightarrow \xi$ and $\gamma : \xi \Rightarrow \eta$ then $\alpha \bullet (\beta \bullet \gamma) = (\alpha \bullet \beta) \bullet \gamma$, and
\item Identity: For every 1-map $\varphi$, there exists a 2-map $1_{\varphi} : \varphi \Rightarrow \varphi$ called the horizontal identity 2-map for $\varphi$, such that for every 2-map $\alpha : \varphi \Rightarrow \psi$, we have $1_{\psi} \bullet \alpha = \alpha = \alpha \bullet 1_{\varphi}$.
\end{enumerate}

\item A binary operation $\circ$ is called horizontal composition of 2-maps $\alpha : \varphi \Rightarrow \psi$ and $\alpha' : \varphi' \Rightarrow \psi'$ and written as $\alpha \circ \alpha'$ : $\varphi\varphi' \Rightarrow \psi\psi'$, governed by two axioms:
\begin{enumerate}
\item Associativity: If $\alpha : \varphi \Rightarrow \psi$, $\alpha' : \varphi' \Rightarrow \psi'$ and $\alpha'' : \varphi'' \Rightarrow \psi''$ then $\alpha \circ (\alpha' \circ \alpha'') = (\alpha \circ \alpha') \circ \alpha''$, and
\item Identity: For every object $a$, there exists a 2-map $1_{1_a} : 1_a \Rightarrow 1_a$ called the vertical identity 2-map for $a$, such that for every 2-map $\alpha : \varphi \Rightarrow \psi$, we have $1_{1_b} \circ \alpha = \alpha = \alpha \circ 1_{1_a}$.
\end{enumerate}

\begin{equation*}
\begin{tikzcd}
A \arrow[r, "\varphi", ""{name=U}] 
& B  \arrow[r, "\varphi'", ""{name=U1}]  & C
\\
A \arrow[r, swap, "\psi", ""{name=D}]
& B \arrow[r, swap, "\psi'", ""{name=D1}] & C
\arrow[Rightarrow, from=U, to=D, "\alpha"]
\arrow[Rightarrow, from=U1, to=D1, "\alpha'"] \\
A \arrow[r, "\varphi \circ \varphi'", ""{name=U2}] 
& C \\
A \arrow[r, swap, "\psi \circ \psi'", ""{name=D2}]
& C	
\arrow[Rightarrow, from=U2, to=D2, "\alpha \circ \alpha'"]
\end{tikzcd}
\end{equation*}

\item A middle four exchange equations for horizontal and vertical compositions
\begin{equation}
	(\alpha \circ \alpha') \bullet (\beta \circ \beta') = (\alpha \bullet \beta) \circ (\alpha' \bullet \beta').
\end{equation}
In the other words the represented diagram is commutative
\begin{equation*}
\begin{tikzcd}
A \arrow[r, ""{name=U}] 
& B  \arrow[r, ""{name=U1}]  & C
\\
A \arrow[r, ""{name=D}]
& B \arrow[r, ""{name=D1}] & C 
\\
A \arrow[r, ""{name=C}]
& B \arrow[r, ""{name=C1}] & C
\arrow[Rightarrow, from=U, to=D, "\alpha"]
\arrow[Rightarrow, from=U1, to=D1, "\alpha'"]
\arrow[Rightarrow, from=D, to=C, "\beta"]
\arrow[Rightarrow, from=D1, to=C1, "\beta'"]
\end{tikzcd}
\end{equation*}
\end{itemize}
\end{definition}

This definition can be expand to higher dimensions. Here we introduce n-category definition. (\cite{litlink1}, \cite{litlink5} p. 17). All the categories considered below are small.

\begin{definition}
\textit {n-category $C^n$ - is a set of the sets $\textbf{Hom}^0(C^n)$, $\textbf{Hom}^1(C^n)$, $\textbf{Hom}^2(C^n)$, $\dots$, $\textbf{Hom}^n(C^n)$, where:}
\begin{itemize}
\item Elements of the set $\textbf{Hom}^m(C^n)$ are called m-maps
\item For each $0 \leq k < m \leq n$ sets $\textbf{Hom}^k(C^n)$ and $\textbf{Hom}^m(C^n)$ creates a category, where $\textbf{Hom}^k(C^n)$ is an object class, $\textbf{Hom}^m(C^n)$ is a set of maps. Composition of the maps $\varphi$ and $\psi \in \textbf{Hom}^m(C^n)$ writes as $\varphi \circ_k \psi$.
\item For each $0 \leq h < k < m \leq n$ sets $\textbf{Hom}^h(C^n)$, $\textbf{Hom}^k(C^n)$ and $\textbf{Hom}^m(C^n)$ creates a 2-category.
\end{itemize}
\end{definition}

$ 0- $maps are usually called objects, and $ 1- $maps usually called maps.

\begin{definition}

\textit {An n-groupoid is an n-category whose all maps have inverse.}

\end{definition}

\begin{definition}

\textit {A totally-connected n-groupoid is an n-groupoid whose all possible pairs $\textbf{Hom}^k (C^n)$ and $\textbf{Hom}^m (C^n) $ form a totally-connected 1-groupoid, which is a groupoid that has at least one map between objects where a map can exist.}

\end{definition}

We consider an arbitrary m-morphism $\alpha : \varphi \Rightarrow \psi$. Define the maps $s^m, t^m : \textbf{Hom}^m(C^n) \rightarrow \textbf{Hom}^{m-1}(C^n)$ such, that
\begin{equation}
	s^m(\alpha) = \varphi
\end{equation}
\begin{equation}
	t^m(\alpha) = \psi
\end{equation}

\begin{equation*}
\begin{tikzcd}
\, \arrow[r, "s^m(\alpha)", ""{name=U2}] 
& \, \\
\, \arrow[r, swap, "t^m(\alpha)", ""{name=D2}]
& \,
\arrow[Rightarrow, from=U2, to=D2, "\alpha"]
\end{tikzcd}
\end{equation*}

\subsection{Characters}
\quad Now we define the concept of $ k- $character on the $ n- $category, counting $k \leq n $. As in the case of the definition of $ n- $categories, we will give the definition sequentially, beginning with small $ n $.

\begin{definition}

\textit{A map $\chi_0 : G \rightarrow \mathbb{C}$ is called 0-character on the object set $G$.}

\end{definition}

The concept of $ 1- $character has already been applied to the description of the algebra of derivations (\cite{litlink4} p. 17)

\begin{definition}
\textit{A map $\chi_1 : \textbf{Hom}^1 (C) \rightarrow \mathbb{C}$ , such that for every 1-maps $\varphi$ and $\psi$}
\begin{center}
	$\chi(\varphi \circ \psi) = \chi(\varphi) + \chi(\psi)$ 
\end{center}
is called 1-character on the 1-category
\end{definition}

As in the case of maps, $ 1- $characters are usually called simply characters.

\begin{definition}
\textit{1-character $\chi_1$, such that $\forall a, \forall \varphi \in $ $\textbf{Hom}^1 (a, a) \Rightarrow \chi_1 (\varphi) = 0$ is called trivial on loops}
\end{definition}

\begin{definition}
\textit{A map $\chi_2 : \textbf{Hom}^{2}(C^2) \rightarrow \mathbb{C}$ such that for every $\alpha, \alpha', \beta$ }
\begin{center}
	$\chi_2(\alpha \circ \alpha') = \chi_2(\alpha) + \chi_2(\alpha')$ 
    \\$\chi_2(\alpha \bullet \beta) = \chi_2(\alpha) + \chi_2(\beta)$ 
\end{center}
\begin{equation*}
\begin{tikzcd}
A \arrow[r, ""{name=U}] 
& B  \arrow[r, ""{name=U1}]  & C
\\
A \arrow[r, ""{name=D}]
& B \arrow[r, ""{name=D1}] & C 
\\
A \arrow[r, ""{name=C}]
& B \arrow[r, ""{name=C1}] & C
\arrow[Rightarrow, from=U, to=D, "\alpha"]
\arrow[Rightarrow, from=U1, to=D1, "\alpha'"]
\arrow[Rightarrow, from=D, to=C, "\beta"]
\arrow[Rightarrow, from=D1, to=C1, "\beta'"]
\end{tikzcd}
\end{equation*}
is called a 2-character on the 2-category
\end{definition}

\begin{definition}
\textit{A 2-groupoid $\Gamma^2$, such that}

\begin{enumerate}
\item \textit{$\Gamma^2$ - totally-connected}
\item \textit{A set $\textbf{Hom}^{2}(\varphi, \psi)$ contains exactly one 2-map if there can exist a 2-map between $ \varphi $ and $ \psi $.}
\end{enumerate}
is called primitive 2-groupoid
\end{definition}

 Let $X_k$ be a set of all k-characters on the $\Gamma^2$.

Let $\varphi_0 : \mathbb{C} \rightarrow X_0$ be a map such that
\begin{equation}
	[\varphi_0(z)](g) = z
\end{equation}
where $z \in \mathbb{C}, g \in G$

Let $\varphi_n : X_{n-1} \rightarrow X_n$ for $n = 1, 2$ be a map such that
\begin{equation}
	[\varphi_n(\chi)](\varphi) = \chi(t^n(\varphi)) - \chi(s^n(\varphi))
\end{equation}

In the following statements we show the correctness of this definition.

\begin{proposal}
\textit{A map $\varphi_1 : X_0 \rightarrow X_1$ is a homomorphism of the linear spaces and $\varphi_1(\chi)$ is trivial on loops.}
\end{proposal}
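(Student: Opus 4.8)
The plan is to verify two things: that $\varphi_1$ maps into $X_1$ (i.e.\ $\varphi_1(\chi)$ is genuinely a $1$-character), that it is moreover trivial on loops, and that the assignment $\chi \mapsto \varphi_1(\chi)$ is linear. The linearity is essentially immediate from the defining formula $[\varphi_1(\chi)](\varphi) = \chi(t^1(\varphi)) - \chi(s^1(\varphi))$: since $\chi$ enters linearly and $s^1, t^1$ do not depend on $\chi$, for scalars $a,b$ and $0$-characters $\chi, \chi'$ we get $[\varphi_1(a\chi + b\chi')](\varphi) = (a\chi+b\chi')(t^1\varphi) - (a\chi+b\chi')(s^1\varphi) = a[\varphi_1(\chi)](\varphi) + b[\varphi_1(\chi')](\varphi)$, pointwise in $\varphi$; I would state this in one line.

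The substantive step is checking the character condition $[\varphi_1(\chi)](\varphi \circ \psi) = [\varphi_1(\chi)](\varphi) + [\varphi_1(\chi)](\psi)$ for composable $1$-maps $\varphi : a \Rightarrow b$ and $\psi : b \Rightarrow c$. Here I would use the compatibility of source and target with composition: for a composite $1$-map one has $s^1(\varphi \circ \psi) = s^1(\varphi) = a$ and $t^1(\varphi \circ \psi) = t^1(\psi) = c$ (this is how composition of $1$-maps in a category behaves, and is implicit in the category axioms recalled earlier). Then
\begin{equation*}
[\varphi_1(\chi)](\varphi \circ \psi) = \chi(c) - \chi(a) = \bigl(\chi(c) - \chi(b)\bigr) + \bigl(\chi(b) - \chi(a)\bigr) = [\varphi_1(\chi)](\psi) + [\varphi_1(\chi)](\varphi),
\end{equation*}
where the middle equality is just inserting and cancelling the term $\chi(b) = \chi(t^1(\varphi)) = \chi(s^1(\psi))$. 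So the telescoping is the whole content, and the only thing one really needs is that $b$ is simultaneously the target of $\varphi$ and the source of $\psi$.

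For triviality on loops, take $\varphi \in \textbf{Hom}^1(a,a)$, so $s^1(\varphi) = t^1(\varphi) = a$; then $[\varphi_1(\chi)](\varphi) = \chi(a) - \chi(a) = 0$ directly from the formula, with no further hypotheses needed. I expect no real obstacle here: the main (mild) subtlety is simply to pin down the behaviour of $s^1$ and $t^1$ under $\circ$ from the axioms in the opening definitions, since the paper states those axioms for a $2$-category but uses $s^m, t^m$ generally; once that is acknowledged, the proof is the three short computations above, organized as: (i) linearity, (ii) additivity over $\circ$ via telescoping, (iii) vanishing on loops.
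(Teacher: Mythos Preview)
Your proposal is correct and follows essentially the same approach as the paper: the paper verifies the same three points (additivity over $\circ$ via the telescoping identity $\chi_0(c)-\chi_0(a)=(\chi_0(c)-\chi_0(b))+(\chi_0(b)-\chi_0(a))$, linearity of $\chi\mapsto\varphi_1(\chi)$, and vanishing on loops from $s^1(\varphi)=t^1(\varphi)=a$), only in a slightly different order and with the linearity stated just for sums rather than general linear combinations. Your version is marginally more explicit about the behaviour of $s^1,t^1$ under composition and about scalar multiples, but there is no substantive difference.
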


\begin{proof}
\begin {enumerate}
\item $\varphi_1(\chi_0)$ is a 1-character, since for 1-maps $\varphi \in \textbf{Hom}^1(a, b)$ и $\varphi' \in \textbf{Hom}^1(b, c)$
\begin{center}
	$\varphi_1(\chi_0)(\varphi \circ \varphi') = \chi_0(c) + \chi_0(b) - \chi_0(b) - \chi_0(a) = \varphi_1(\chi_0)(\varphi) + \varphi_1(\chi_0)(\varphi')$
\end{center}
\item $\varphi_1(\cdot)$ is a homomorphism of the linear spaces, since 
\begin{center}
$\varphi_1(\chi_0^{(1)} + \chi_0^{(2)}) = \varphi_1(\chi_0^{(1)}) + \varphi_1(\chi_0^{(2)})$
\end{center}
\item $\varphi_1(\chi_0)$ is trivial on loops, since for every $\varphi \in \textbf{Hom}^1(a, a)$ 
\begin{center}
	$\varphi_1(\chi_0)(\varphi) = \chi_0(a) - \chi_0(a) = 0$
\end{center}
\end{enumerate}
\end{proof}

\begin{proposal}
\textit{A map $\varphi_2 : X_1 \rightarrow X_2$ is a homomorphism of the linear spaces and $\varphi_2$ is an epimorphism.}
\end{proposal}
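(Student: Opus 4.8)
The plan is to verify in two stages that $\varphi_2$ is a well-defined linear map and then that it is surjective. First I would check that $\varphi_2(\chi_1)$ actually is a $2$-character for any $1$-character $\chi_1$ on $\Gamma^2$. Recall $[\varphi_2(\chi_1)](\alpha) = \chi_1(t^2(\alpha)) - \chi_1(s^2(\alpha))$. To see additivity under horizontal composition $\alpha \circ \alpha'$ (where $\alpha : \varphi \Rightarrow \psi$, $\alpha' : \varphi' \Rightarrow \psi'$ with composable sources/targets), I would use $s^2(\alpha \circ \alpha') = \varphi \circ \varphi'$ and $t^2(\alpha \circ \alpha') = \psi \circ \psi'$ together with the fact that $\chi_1$ is a $1$-character, exactly as in the proof of the previous proposal:
\begin{center}
$[\varphi_2(\chi_1)](\alpha \circ \alpha') = \chi_1(\psi) + \chi_1(\psi') - \chi_1(\varphi) - \chi_1(\varphi') = [\varphi_2(\chi_1)](\alpha) + [\varphi_2(\chi_1)](\alpha')$.
\end{center}
For vertical composition $\alpha \bullet \beta$ with $\alpha : \varphi \Rightarrow \psi$, $\beta : \psi \Rightarrow \xi$, one has $s^2(\alpha \bullet \beta) = \varphi$ and $t^2(\alpha \bullet \beta) = \xi$, so the difference telescopes: $[\varphi_2(\chi_1)](\alpha \bullet \beta) = \chi_1(\xi) - \chi_1(\varphi) = (\chi_1(\xi) - \chi_1(\psi)) + (\chi_1(\psi) - \chi_1(\varphi))$, giving the vertical additivity. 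Linearity of $\chi_1 \mapsto \varphi_2(\chi_1)$ in $\chi_1$ is immediate since the defining formula is linear in $\chi_1$.

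The substantive part is the epimorphism claim: given an arbitrary $2$-character $\chi_2$ on the primitive $2$-groupoid $\Gamma^2$, I must produce a $1$-character $\chi_1$ with $\varphi_2(\chi_1) = \chi_2$. Here I would use the two defining properties of a primitive $2$-groupoid. Since between any pair $\varphi, \psi$ of parallel $1$-maps there is exactly one $2$-map whenever one can exist, and $\Gamma^2$ is totally connected, I can define $\chi_1$ on $1$-maps by a "potential" construction: fix, within each connected component of parallel $1$-maps, a basepoint $\varphi_0$, set $\chi_1(\varphi_0)$ arbitrarily (say $0$), and for any other $\varphi$ in that component define $\chi_1(\varphi) := \chi_2(\alpha)$ where $\alpha$ is the unique $2$-map $\varphi_0 \Rightarrow \varphi$. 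I then need: (i) this $\chi_1$ is a genuine $1$-character, i.e. $\chi_1(\varphi \circ \varphi') = \chi_1(\varphi) + \chi_1(\varphi')$ — this should follow by applying the horizontal-composition additivity of $\chi_2$ to the unique $2$-maps involved, using uniqueness to identify $\alpha_{\varphi_0 \Rightarrow \varphi} \circ \alpha_{\varphi_0' \Rightarrow \varphi'}$ with the canonical $2$-map out of the composite basepoint; and (ii) $\varphi_2(\chi_1) = \chi_2$, i.e. for every $2$-map $\alpha : \varphi \Rightarrow \psi$ we get $\chi_1(\psi) - \chi_1(\varphi) = \chi_2(\alpha)$ — this is where I use that $\chi_2(\alpha)$ only depends on $s^2(\alpha), t^2(\alpha)$ (again by uniqueness of $2$-maps) and that $\chi_2$ is additive under vertical composition, so $\chi_2(\alpha_{\varphi_0 \Rightarrow \psi}) = \chi_2(\alpha_{\varphi_0 \Rightarrow \varphi}) + \chi_2(\alpha_{\varphi \Rightarrow \psi})$.

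I expect the main obstacle to be bookkeeping the basepoint choices so that $\chi_1$ is consistently defined across the interaction between horizontal composition and the connected-component structure: one must check that composing basepoints of two components lands in (or is canonically identified with) the basepoint of the component of the composite, or otherwise absorb the discrepancy — and here the middle-four-interchange law is the tool that guarantees the horizontal and vertical bookkeeping agree. A cleaner alternative I would consider is to avoid explicit basepoints: define $\chi_1(\varphi) - \chi_1(\psi) := \chi_2(\alpha_{\psi \Rightarrow \varphi})$ as a $2$-cocycle-type relation, observe it is consistent (path-independent) because any two $2$-maps with the same source and target are equal in a primitive $2$-groupoid, hence it integrates to a well-defined $\chi_1$ up to the harmless additive constants on each component, and then verify the $1$-character identity directly. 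Either way, the key leverage is the "exactly one $2$-map" hypothesis, which collapses all the $2$-dimensional data of $\Gamma^2$ onto the set of parallel pairs of $1$-maps.
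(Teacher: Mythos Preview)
Your verification that $\varphi_2(\chi_1)$ is a $2$-character and that $\varphi_2$ is linear is essentially identical to the paper's, so that part is fine.

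For surjectivity, your strategy --- pick a basepoint $\varphi_0$ in each $\textbf{Hom}^1(a,b)$, set $\chi_1(\varphi_0)=0$, and define $\chi_1(\varphi)=\chi_2(\alpha_{\varphi_0\Rightarrow\varphi})$ --- is exactly the paper's idea, but you have not closed the gap you yourself flag. With \emph{arbitrary} basepoints the $1$-character identity generally fails: for $\varphi\in\textbf{Hom}^1(a,b)$ and $\varphi'\in\textbf{Hom}^1(b,c)$, the computation you sketch gives
\[
\chi_1(\varphi\circ\varphi')-\chi_1(\varphi)-\chi_1(\varphi')
=\chi_2\bigl(\alpha_{\gamma_0\Rightarrow\,\varphi_0\circ\varphi_0'}\bigr),
\]
where $\gamma_0$ is the basepoint of $\textbf{Hom}^1(a,c)$, and there is no reason for this to vanish. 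The interchange law does not repair this, nor are the additive constants ``harmless'': they are precisely what breaks the additivity of $\chi_1$ under $\circ$.

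The paper's fix is to choose the basepoints coherently from the start. It fixes one object $a$, selects one representative in each $\textbf{Hom}^1(a,b)$, and then \emph{closes under inverses and composition} to obtain a set $F$ on which $\chi_1$ is declared to be zero. The point of this closure is exactly that $\varphi_0\circ\varphi_0'\in F$ whenever $\varphi_0,\varphi_0'\in F$, so the obstruction term above is $\chi_2$ of an identity $2$-map, hence zero. In other words, the missing ingredient in your proposal is not the interchange law but the construction of a composition-closed system of basepoints (a spanning sub-groupoid); once you add that, your argument becomes the paper's.
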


\begin{proof}
\begin {enumerate}
\item $\varphi_2(\chi_1)$ is a 2-character, since for $\varphi, \psi, \eta \in \textbf{Hom}^1(a, b)$, $\varphi', \psi' \in \textbf{Hom}^1(b, c)$ и $\alpha : \varphi \Rightarrow \psi$, $\alpha' : \varphi' \Rightarrow \psi'$, $\beta : \psi \Rightarrow \eta$ 
\begin{center}
	$\varphi_2(\chi_1)(\alpha \circ \alpha') = \chi_1(\psi') + \chi_1(\psi) - \chi_1(\varphi) - \chi_1(\varphi') = \varphi_2(\chi_1)(\alpha) + \varphi_2(\chi_1)(\alpha')$
    \\$\varphi_2(\chi_1)(\alpha \bullet \beta) = \chi_1(\eta) - \chi_1(\varphi) = \varphi_2(\chi_1)(\alpha) + \varphi_2(\chi_1)(\beta)$
\end{center}
\item $\varphi_2(\cdot)$ is a homomorphism of the linear spaces, since 
\begin{center}
$\varphi_2(\lambda \chi_1^{(1)} + \mu \chi_1^{(2)}) = \lambda\varphi_2(\chi_1^{(1)}) + \mu\varphi_2(\chi_1^{(2)})$
\end{center}
\item $\varphi_2(\cdot)$ is an epimorphism.

 $\forall \chi_2 \in X_2$ $\exists \chi_1 \in X_1$ : $\varphi_2(\chi_1) = \chi_2$. Let create $\chi_1$ on given $\chi_2$. Let $\chi_1(1_a) = 0$ $\forall a \in Obj(\Gamma^2)$. Fix the object $a$. Let $F$ - be a set, where only one representative from each $\textbf{Hom}^1 (a, b)$ (The choice of the set $ F $ is not unique). Let add in $F$ an inverse to every 1-map. Then add all of its compositions. As a result, $\forall a, b \in Obj(\Gamma^2)$ $\exists \varphi \in \textbf{Hom}^1(a, b)$ such that $\varphi \in F$, also $\varphi, \psi \in F \rightarrow \varphi \circ \psi \in F$, if composition exists. Let $\chi_1(\varphi) = 0$ $\forall \varphi \in F$. Then if $\psi \notin F$, то $\exists \varphi \in F, \alpha \in \textbf{Hom}^2(\varphi, \psi)$ such that $\alpha : \varphi \Rightarrow \psi$. Let $\chi_1(\psi) = \chi_2(\alpha)$. Adjusted $\chi_1$ is a 1-character on $\Gamma^2$, such $\chi_1(\varphi \circ \psi) = \chi_1(\varphi) + \chi_1(\psi)$, since
\begin{center}
	$\exists \varphi' \in F, \alpha \in \textbf{Hom}^2(\varphi, \varphi')$ such that $\alpha : \varphi' \Rightarrow \varphi$
    \\$\exists \psi' \in F, \beta \in \textbf{Hom}^2(\psi, \psi')$ such that $\beta : \psi' \Rightarrow \psi$
\end{center}
$\chi_2$ is a 2-character, then
\begin{center}
	$\chi_2(\alpha \circ \beta) = \chi_2(\alpha) + \chi_2(\beta)$
\end{center}
Then by construction of $\chi_1$:
\begin{center}
	$\chi_2(\alpha \circ \beta) = \chi_1(\varphi \circ \psi)$
    \\$\chi_2(\alpha) = \chi_1(\varphi)$, $\chi_2(\beta) = \chi_1(\psi)$
    \\$\chi_1(\varphi \circ \psi) = \chi_1(\varphi) + \chi_1(\psi)$
\end{center}

It follows that $ \varphi_2 $ is an epimorphism.
\end {enumerate}
\end{proof}

Let $\varphi_s : 0 \rightarrow \mathbb{C}$ be a map such that
\begin{equation}
	\varphi_s(0) = 0 \in \mathbb{C}
\end{equation}

Let $\varphi_t : X_2 \rightarrow 0$ be a map such that
\begin{equation}
	\varphi_t(\chi_2) = 0 \, \forall \chi_2 \in X_2
\end{equation}

Let prove the following important proposal, which will be proved later for an arbitrary totally-connected n-groupoid $ \Gamma^n $.

\begin{lemma}
\textit{A short sequence of algebraic objects and homomorphisms}
\begin{equation}
0 \xrightarrow[]{\varphi_s} \mathbb{C} \xrightarrow[]{\varphi_0} X_0 \xrightarrow[]{\varphi_1} X_1 \xrightarrow[]{\varphi_2} X_2 \xrightarrow[]{\varphi_t} 0
\end{equation}
\textit{is an exact short sequence.}
\end{lemma}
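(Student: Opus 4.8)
The plan is to verify exactness separately at each of the four nonzero terms $\mathbb{C}$, $X_0$, $X_1$, $X_2$. Two of these are already in hand. Exactness at $X_2$ means $\Ima\varphi_2 = \ker\varphi_t = X_2$, i.e. surjectivity of $\varphi_2$, which is exactly the epimorphism statement proved above for $\varphi_2$. Exactness at $\mathbb{C}$ means $\ker\varphi_0 = \Ima\varphi_s = \{0\}$, i.e. injectivity of $\varphi_0$; but $\varphi_0(z)$ is the constant $0$-character with value $z$ on the nonempty object set $G$, so $\varphi_0(z)=0$ forces $z=0$. It remains to prove $\ker\varphi_1 = \Ima\varphi_0$ and $\ker\varphi_2 = \Ima\varphi_1$. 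The inclusions $\Ima\varphi_0\subseteq\ker\varphi_1$ and $\Ima\varphi_1\subseteq\ker\varphi_2$ reduce to $\varphi_1\circ\varphi_0=0$ and $\varphi_2\circ\varphi_1=0$: the first follows from (5) since $[\varphi_1(\varphi_0(z))](\varphi)=z-z=0$, and for the second, for any $2$-map $\alpha$ the source and target $1$-maps $s^2(\alpha),t^2(\alpha)$ are parallel, so $s^1 s^2(\alpha)=s^1 t^2(\alpha)$ and $t^1 s^2(\alpha)=t^1 t^2(\alpha)$, and two applications of (5) make $[\varphi_2(\varphi_1(\chi_0))](\alpha)$ telescope to $0$.

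For exactness at $X_0$: if $\chi_0\in\ker\varphi_1$ then $\chi_0(t^1\varphi)=\chi_0(s^1\varphi)$ for every $1$-map $\varphi$, and since $\Gamma^2$ is totally-connected there is a $1$-map between any two objects, so $\chi_0$ is constant and hence equal to $\varphi_0(z)$ for that common value $z$. With the previous paragraph this gives $\ker\varphi_1=\Ima\varphi_0$.

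The only real work is the inclusion $\ker\varphi_2\subseteq\Ima\varphi_1$. Let $\chi_1\in X_1$ with $\varphi_2(\chi_1)=0$; unwinding (5), this says $\chi_1(\varphi)=\chi_1(\psi)$ whenever there is a $2$-map $\varphi\Rightarrow\psi$, and since $\Gamma^2$ is primitive such a $2$-map exists for every pair of parallel $1$-maps. Hence the value $\chi_1(\varphi)$ depends only on the source and target of $\varphi$. I would then fix a base object $a_0$, choose by total-connectedness a $1$-map $\varphi_b:a_0\Rightarrow b$ for each object $b$ with $\varphi_{a_0}=1_{a_0}$, and set $\chi_0(b):=\chi_1(\varphi_b)$; note $\chi_0(a_0)=\chi_1(1_{a_0})=0$ because $\chi_1(1_{a_0})=\chi_1(1_{a_0}\circ 1_{a_0})=2\chi_1(1_{a_0})$. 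To check $\varphi_1(\chi_0)=\chi_1$, let $\varphi:a\Rightarrow b$ be any $1$-map; then $\varphi_a\circ\varphi$ and $\varphi_b$ both lie in $\textbf{Hom}^1(a_0,b)$, so
\begin{equation*}
\chi_1(\varphi_a)+\chi_1(\varphi)=\chi_1(\varphi_a\circ\varphi)=\chi_1(\varphi_b)
\end{equation*}
by the $1$-character property and then the $2$-map invariance of $\chi_1$. Thus $\chi_1(\varphi)=\chi_1(\varphi_b)-\chi_1(\varphi_a)=\chi_0(b)-\chi_0(a)=[\varphi_1(\chi_0)](\varphi)$, so $\chi_1\in\Ima\varphi_1$, and together with $\varphi_2\circ\varphi_1=0$ this yields $\ker\varphi_2=\Ima\varphi_1$.

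I expect the main obstacle to be nothing deep but rather the careful bookkeeping in this last step: ensuring $\varphi_a\circ\varphi$ and $\varphi_b$ genuinely lie in the same class $\textbf{Hom}^1(a_0,b)$ so that primitivity applies, and using the normalisation $\chi_1(1_{a_0})=0$ consistently at the base object. All other steps are a routine unwinding of the definitions of $\varphi_s,\varphi_0,\varphi_1,\varphi_2,\varphi_t$ together with the character and middle-four-exchange identities.
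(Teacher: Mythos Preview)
Your argument is correct and follows the same overall plan as the paper: check exactness term by term, invoking the already-established surjectivity of $\varphi_2$ at $X_2$ and the obvious injectivity of $\varphi_0$ at $\mathbb{C}$. The one genuine difference is at $X_1$. The paper introduces the intermediate notion of a $1$-character \emph{trivial on loops} and shows that both $\Ima\varphi_1$ and $\ker\varphi_2$ coincide with this set: for $\ker\varphi_2$ it writes an arbitrary loop as $\xi=\varphi\circ\psi^{-1}$ with $\varphi,\psi$ parallel and uses primitivity plus $\chi_1(\psi^{-1})=-\chi_1(\psi)$, while for $\Ima\varphi_1$ it essentially just quotes Proposition~1.1 for one inclusion and is silent on the other. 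You instead bypass the loop characterisation entirely and construct the potential $\chi_0$ directly from a choice of maps $\varphi_b:a_0\Rightarrow b$. The two arguments are equivalent, but yours is more self-contained at this step; the paper's version has the advantage of isolating ``trivial on loops'' as a reusable description of $\Ima\varphi_1=\ker\varphi_2$, which it later exploits when defining the weak inner derivations $Der^*_{Inn}$.
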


\begin{proof}
For this sequence to be exact, the following relations must be satisfied
\begin{center}
$\Ima\varphi_{i} = \ker\varphi_{i + 1}$, $i \in \{s, 0, 1, 2, t\}$.
\end{center}

Let's check it out.

\begin{enumerate}
\item $\Ima\varphi_t = \{0\}$. $\ker\varphi_0 = \{0\}$ as well, since $\varphi_0(z) = 0 \Leftrightarrow z = 0$ by design. A zero character is a character everywhere equal to zero.
\item $\Ima\varphi_0$ - is a set of the 0-characters such that $\chi_0 \in \Ima\varphi_0 \Leftrightarrow \chi_0(g) = \chi_0(h)$ for every $g, h \in G$. Since $[\varphi_1(\chi_0)](\varphi) = \chi_0(h) - \chi_0(g)$ for $\varphi \in \textbf{Hom} (g, h)$, then $[\varphi_1(\chi_0)](\varphi) = 0 \Leftrightarrow \chi_0 \in \Ima\varphi_0$.
\item $\Ima\varphi_1$ - set of the trivial on loops 1-characters. Let's show that $\varphi_2(\chi_1) = 0 \Leftrightarrow \chi_1$ - is trivial on loops.
\begin{itemize}
\item Let's first verify that if $\varphi_1(\chi_1) = 0 $, leads the fact that $\chi$ is trivial on loops. If $\varphi_1(\chi_1) = 0$, then $\forall \alpha : \varphi \Rightarrow \psi$ satisfied $\chi_1(\varphi) = \chi_1(\psi)$. Let's fix the objects $a$ и $b$. Let's consider $\xi \in$ $\textbf{Hom}^1 (a, a)$, such that $\xi = \varphi \circ \psi^{-1}$, $\psi^{-1}$ exists, since $\Gamma^2$ is a groupoid. Then $\chi_1(\xi) = \chi_1(\varphi) + \chi_1(\psi^{-1})$ ,but $\chi_1(\psi) = -\chi_1(\psi^{-1})$, which means that $\chi_1(\xi) = 0$

\item Inversely, $\varphi_1(\chi) = 0 \Leftarrow \chi$ -- is trivial on loops.
Let's fix the objects $a$ и $b$. Let's consider $\xi \in$ $\textbf{Hom}^1 (a, a)$, such that $\xi = \varphi \circ \psi^{-1}$ for $\psi, \varphi \in$ $\textbf{Hom}^1 (a, b)$. By condition $\chi_1(\xi) = 0$, which means $\chi_1(\varphi) = -\chi_1(\psi^{-1}) = \chi_1(\psi)$, i.e $\forall \alpha : \varphi \Rightarrow \psi$, $\varphi_1(\chi_1)(\alpha) = 0$, or $\varphi_1(\chi_1) = 0$.

\end{itemize}

\item $\ker \varphi_t = X_2$ by design, $\Ima \varphi_2 = X_2$, since $\varphi_2$ - epimorphism.

\end{enumerate}
\end{proof}

Our next goal is to generalize this lemma to the general case of $ n- $categories.

\begin{definition}
A map $\chi_m : \textbf{Hom}^m(C^n) \rightarrow \mathbb{C}, m \geq 1$, such that for every $\varphi, \psi \in \textbf{Hom}^m(C^n)$ and for every $0 \leq k < m \leq n$, where composition $\varphi \circ_k \psi$ is possible,
\begin{center}	
	$\chi_m(\varphi \circ_k \psi) = \chi_m(\varphi) + \chi_m(\psi)$
\end{center}
is called m-character on the n-category $C^n$.
\end{definition}

Let $X_k$ be a set of all k-characters on the groupoid $\Gamma^{\infty}$.

Let $\varphi_m : X_{m-1} \rightarrow X_m$ for $m \geq 1$ be a map such that
\begin{equation}
	[\varphi_n(\chi)](\varphi) = \chi(t^n(\varphi)) - \chi(s^n(\varphi))
\end{equation}

In the following statements we show the correctness of this definition.

\begin{proposal}
\textit{A map $\varphi_m$ is a homomorphism oh the linear spaces.}
\end{proposal}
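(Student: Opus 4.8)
The plan is to show that $\varphi_m$ is a well-defined linear map $X_{m-1} \to X_m$ by verifying two things: first, that $\varphi_m(\chi)$ is genuinely an $m$-character whenever $\chi$ is an $(m-1)$-character, and second, that $\varphi_m$ respects addition and scalar multiplication. The second point is immediate and routine, exactly as in Propositions on $\varphi_1$ and $\varphi_2$: since $[\varphi_m(\lambda\chi + \mu\chi')](\alpha) = (\lambda\chi + \mu\chi')(t^m(\alpha)) - (\lambda\chi+\mu\chi')(s^m(\alpha))$ and $\chi, \chi'$ are $\mathbb{C}$-valued, linearity in the character argument follows pointwise on each $m$-morphism $\alpha$. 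So the real content is the first point.

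For the first point, I would fix an $(m-1)$-character $\chi$ and an integer $k$ with $0 \le k < m \le n$, and take two $m$-morphisms $\alpha, \beta$ for which the composition $\alpha \circ_k \beta$ is defined. I need
\[
[\varphi_m(\chi)](\alpha \circ_k \beta) = [\varphi_m(\chi)](\alpha) + [\varphi_m(\chi)](\beta),
\]
i.e. $\chi(t^m(\alpha \circ_k \beta)) - \chi(s^m(\alpha \circ_k \beta)) = \chi(t^m(\alpha)) - \chi(s^m(\alpha)) + \chi(t^m(\beta)) - \chi(s^m(\beta))$. The key structural fact is how the source and target maps $s^m, t^m$ interact with the $k$-fold composition. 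One should split into the case $k = m-1$ (vertical-type composition, where $\alpha : \varphi \Rightarrow \psi$ and $\beta : \psi \Rightarrow \xi$ compose to give something $\varphi \Rightarrow \xi$, so $s^m(\alpha \circ_{m-1}\beta) = s^m(\alpha)$ and $t^m(\alpha\circ_{m-1}\beta) = t^m(\beta)$, and the two middle terms $\chi(s^m(\beta)) = \chi(\psi) = \chi(t^m(\alpha))$ cancel) and the case $k < m-1$ (horizontal-type composition, where $s^m(\alpha \circ_k \beta) = s^m(\alpha) \circ_k s^m(\beta)$ and $t^m(\alpha\circ_k\beta) = t^m(\alpha)\circ_k t^m(\beta)$ as $(m-1)$-morphisms, so the identity reduces to the statement that $\chi$ is an $(m-1)$-character applied to these $k$-fold composites of $(m-1)$-morphisms). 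In both cases the needed relation comes out, using either the defining property of $\chi$ as an $(m-1)$-character or the cancellation of the glued middle morphism.

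The main obstacle is purely bookkeeping: making precise the compatibility relations $s^m(\alpha \circ_k \beta) = s^m(\alpha) \circ_k s^m(\beta)$ for $k < m-1$ and the telescoping identity for $k = m-1$. These follow from the axioms packaged in the definition of $n$-category — specifically from the fact that for each triple $h < k < m$ the sets $\textbf{Hom}^h, \textbf{Hom}^k, \textbf{Hom}^m$ form a $2$-category, whose source/target compatibility and middle-four-interchange encode exactly these relations. I would state these compatibilities as a short preliminary observation (or cite them as part of the structure of a $2$-category from Definition 1.1), then the proposition follows by the two-case computation above, mirroring the proofs already given for $\varphi_1$ and $\varphi_2$.
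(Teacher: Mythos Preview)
Your proposal is correct and in fact more careful than the paper's own argument. The paper's proof writes $\varphi:a\to b$, $\psi:b\to c$ with $a,b,c\in\textbf{Hom}^k(\Gamma^n)$ and then computes $\chi_{m-1}(c)-\chi_{m-1}(a) = [\chi_{m-1}(c)-\chi_{m-1}(b)]+[\chi_{m-1}(b)-\chi_{m-1}(a)]$; but $\chi_{m-1}$ is defined on $(m-1)$-morphisms, so this literal computation only makes sense when $k=m-1$. The paper does not separately treat the case $k<m-1$, where $s^m(\alpha\circ_k\beta)=s^m(\alpha)\circ_k s^m(\beta)$ and one must invoke the $(m-1)$-character property of $\chi$ rather than a telescoping cancellation.

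You correctly isolate exactly these two cases and identify the structural input needed for each: telescoping for $k=m-1$, and the compatibility of $s^m,t^m$ with $\circ_k$ (coming from the $2$-category structure on $\textbf{Hom}^h,\textbf{Hom}^k,\textbf{Hom}^m$) together with the additivity of $\chi$ for $k<m-1$. Your linearity argument matches the paper's. So your approach is the same in spirit but fills a gap the paper leaves implicit.
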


\begin{proof}
\begin {enumerate}
\item Let fix $\textbf{Hom}^{k}(\Gamma^n)$, $0 \leq k < m \leq n$. Let $\varphi, \psi \in \textbf{Hom}^{m}(\Gamma^n)$ such that $\varphi : a \rightarrow b$, $\psi : b \rightarrow c$, $\varphi \circ_k \psi : a \rightarrow c$ for $a, b, c \in \textbf{Hom}^{k}(\Gamma^n)$. Let's proof that $\varphi_m(\cdot)$ is a m-character :
\begin{center}
	$\chi_m(\varphi \circ_k \psi) = \chi_{m-1}(c) - \chi_{m-1}(a)$
    \\$\chi_{m-1}(c) - \chi_{m-1}(a) = [\chi_{m-1}(c) - \chi_{m-1}(b)] + [\chi_{m-1}(b) - \chi_{m-1}(a)]$
    \\$\chi_m(\varphi \circ_k \psi) = \chi_m(\varphi) + \chi_m(\psi)$
\end{center}
\item $\varphi_m(\cdot)$ is a homomorphism, since for every $\chi_{m-1}^{(1)}, \chi_{m-1}^{(2)} \in X_{m-1}$ the following equation is performed
\begin{center}
$\varphi_m(\chi_{m-1}^{(1)} + \chi_{m-1}^{(2)}) = \varphi_m(\chi_{m-1}^{(1)}) + \varphi_m(\chi_{m-1}^{(2)})$
\end{center}
\end{enumerate}
\end{proof}

Let $\varphi_1 : \mathbb{C} \rightarrow X_0$ be a map, such that
\begin{equation}
	[\varphi_1(z)](g) = z
\end{equation}
where $z \in \mathbb{C}, g \in G$

\begin{theorem}
\textit{A sequence of algebraic objects and homomorphisms}
\begin{center}
$0 \xrightarrow[]{} \mathbb{C} \xrightarrow[]{\varphi_0} X_0 \xrightarrow[]{\varphi_1} X_1 \rightarrow \cdots \rightarrow X_{m-1} \xrightarrow[]{\varphi_m} X_m \rightarrow \cdots$
\end{center}
\textit{is an exact sequence.}
\end{theorem}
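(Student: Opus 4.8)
The plan is to follow the scheme of the Lemma, verifying $\Ima\varphi_i=\ker\varphi_{i+1}$ at every spot, grouped into three cases: the left end $\mathbb{C}\to X_0$, the spot $X_0\to X_1$, and the generic spot $X_{m-1}\xrightarrow{\varphi_m}X_m\xrightarrow{\varphi_{m+1}}X_{m+1}$ with $m\ge1$. The first two cases are word for word the corresponding parts of the Lemma and need nothing new: $\varphi_0$ is injective because $\varphi_0(z)$ is the function constantly equal to $z$; and $\chi_0\in\ker\varphi_1$ iff $\chi_0(t^1(\varphi))=\chi_0(s^1(\varphi))$ for every $1$-map $\varphi$, which by total-connectedness of $\Gamma^n$ (there is a $1$-map between any two objects) means $\chi_0$ is constant, i.e. $\chi_0\in\Ima\varphi_0$.

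For the generic spot, the inclusion $\Ima\varphi_m\subseteq\ker\varphi_{m+1}$ is the assertion $\varphi_{m+1}\circ\varphi_m=0$: given $\chi\in X_{m-1}$ and an $(m+1)$-map $\alpha:\varphi\Rightarrow\psi$, the $m$-maps $\varphi,\psi$ are parallel, so $s^m(\varphi)=s^m(\psi)$ and $t^m(\varphi)=t^m(\psi)$, and hence
\begin{center}
$[\varphi_{m+1}(\varphi_m(\chi))](\alpha)=[\varphi_m(\chi)](\psi)-[\varphi_m(\chi)](\varphi)=\big(\chi(t^m\psi)-\chi(s^m\psi)\big)-\big(\chi(t^m\varphi)-\chi(s^m\varphi)\big)=0.$
\end{center}
For the reverse inclusion, I would first identify $\ker\varphi_{m+1}$ exactly as in the Lemma: it is the set of $m$-characters $\chi_m$ that are \emph{trivial on $(m-1)$-loops}, i.e. $\chi_m(\varphi)=0$ whenever $s^m(\varphi)=t^m(\varphi)$. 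Indeed, if $\varphi_{m+1}(\chi_m)=0$ then a loop $\varphi:a\Rightarrow a$ is parallel to $1_a$, so by total-connectedness some $(m+1)$-map joins them and $\chi_m(\varphi)=\chi_m(1_a)=0$ (using $\chi_m(1_a)=0$, valid for any character since $1_a\circ_{m-1}1_a=1_a$); conversely, if $\chi_m$ is trivial on $(m-1)$-loops then, since $\Gamma^n$ is a groupoid, for any $(m+1)$-map $\alpha:\varphi\Rightarrow\psi$ the $m$-map $\varphi\circ_{m-1}\psi^{-1}$ is an $(m-1)$-loop, so $0=\chi_m(\varphi\circ_{m-1}\psi^{-1})=\chi_m(\varphi)-\chi_m(\psi)$ and $\varphi_{m+1}(\chi_m)(\alpha)=0$.

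It remains to show that every $\chi_m$ trivial on $(m-1)$-loops lies in $\Ima\varphi_m$, and this is the heart of the matter. Regard the $(m-1)$-maps as objects and the $m$-maps (under $\circ_{m-1}$) as morphisms of a totally-connected $1$-groupoid $\mathcal G_m$; on it $\chi_m$ is a $1$-character trivial on loops, so, exactly as in the Lemma, choosing in each connected component of $\mathcal G_m$ (that is, in each parallel class of $(m-1)$-maps) a base object together with a connecting $m$-path to every object and integrating $\chi_m$ along these paths produces a well-defined function $\chi_{m-1}$ on the $(m-1)$-maps with $\chi_{m-1}(t^m(\varphi))-\chi_{m-1}(s^m(\varphi))=\chi_m(\varphi)$; triviality on loops is precisely what makes this independent of the chosen paths. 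The remaining point — and the one genuine obstacle — is that $\chi_{m-1}$ must be a true $(m-1)$-character, i.e. additive for \emph{every} lower composition $\circ_j$ with $j<m-1$, not merely compatible with $\varphi_m$. For $m=1$ there is no such $\circ_j$ and nothing to check, recovering the Lemma, and for $m=2$ this is exactly the argument already carried out in the proof that $\varphi_2$ is an epimorphism, where one chooses a set $F$ of representative $1$-maps closed under composition and inverses. In general I would carry the base objects inside a family $F$ of $(m-1)$-maps that contains every identity $1_\xi$, meets every parallel class, and is closed under all $\circ_j$ ($j<m-1$) and under $\circ_{m-1}$-inversion; one then checks, using the functoriality identities $s^m(\varphi\circ_j\psi)=s^m(\varphi)\circ_j s^m(\psi)$ and $t^m(\varphi\circ_j\psi)=t^m(\varphi)\circ_j t^m(\psi)$ together with additivity of $\chi_m$, that additivity of $\chi_{m-1}$ under each $\circ_j$ reduces to the closure of $F$. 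Producing such a globally coherent $F$, and checking that $\chi_{m-1}$ can be consistently normalized on it, is where the associativity and interchange axioms of the $n$-category are really used; I expect this coherence bookkeeping — settled either by a Zorn's-lemma build-up of partial coherent choices, or by induction on the codimension $n-m$ using that the parallel classes again form a totally-connected structure — to be the main difficulty, everything else being a direct lift of the $n\le2$ arguments.
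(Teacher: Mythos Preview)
Your plan coincides with the paper's: the paper checks exactness at the left end directly, and for the generic spot it simply observes that the three consecutive levels $\textbf{Hom}^{m-2}$, $\textbf{Hom}^{m-1}$, $\textbf{Hom}^{m}$ form a $2$-groupoid and invokes Lemma~1.1 on that auxiliary $2$-groupoid (remarking only that it need not be primitive, since the epimorphicity of $\varphi_m$ is irrelevant for exactness at $X_{m-1}$). Your paragraph ``regard the $(m-1)$-maps as objects and the $m$-maps as morphisms of a totally-connected $1$-groupoid $\mathcal G_m$\ldots'' is precisely this reduction, written out in slightly different language.

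Where you go beyond the paper is your last paragraph. You correctly isolate the point that the preimage $\chi_{m-1}$ produced by the Lemma is \emph{a priori} only a function on $\textbf{Hom}^{m-1}$ (a ``$0$-character'' on the auxiliary $2$-groupoid), whereas a genuine $(m-1)$-character on $\Gamma^n$ must in addition be additive for every lower composition $\circ_j$ with $j<m-1$. The paper's proof does not comment on this at all; it simply writes ``We apply Lemma~1.1 to this section'' and moves on. Your proposed remedy --- carry the base objects inside a family $F$ containing the identities, meeting every parallel class, and closed under all $\circ_j$ and under $\circ_{m-1}$-inversion, then use the interchange identities $s^m(\varphi\circ_j\psi)=s^m(\varphi)\circ_j s^m(\psi)$, $t^m(\varphi\circ_j\psi)=t^m(\varphi)\circ_j t^m(\psi)$ to propagate additivity, mirroring the $F$-construction in the epimorphism proof for $\varphi_2$ --- is the natural one. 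But, as you acknowledge, you do not actually produce such an $F$ or verify the coherence; so on this particular issue you and the paper stand in the same place, except that your outline is explicit about where the difficulty lies.
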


\begin{proof}
For this sequence to be exact, the following relations must be satisfied:
\begin{center}
$\Ima\varphi_{i} = \ker\varphi_{i + 1}$, $i \in \mathbb{N}_0$
\end{center}
Let's check it out:
\begin{enumerate}
\item $\Ima\varphi = \{0\}$. Also $\ker\varphi_0 = \{0\}$, since $\varphi_0(z) = 0 \Leftrightarrow z = 0$ by design. 
\item A category, where a set $\textbf{Hom}^{m-2}(\Gamma^n)$ is an object set, $\textbf{Hom}^{m-1}(\Gamma^n)$ -- is a set of 1-morphisms and $\textbf{Hom}^{m}(\Gamma^n)$ -- is a set of 2-morphisms, is a 2-category by definition. And it is a 2-groupoid in our case.
\begin{center}
$X_{m-2} \xrightarrow[]{\varphi_{m-1}} X_{m-1} \xrightarrow[]{\varphi_m} X_m$
\end{center}
We apply Lemma 1.1 to this section. This groupoid is not primitive, but in this case the epimorphicity of the morphism $\varphi_m$ is not important, so the lemma is applied correctly.

\end{enumerate}
\end{proof}

In terms of the constructed sequence, it is possible to describe a primitive 2-groupoid:

\begin{proposal}
\textit{If for some n-groupoid, the 2-groupoid formed by the sets $ \textbf {Hom}^{0} (\Gamma^n) $, $ \textbf{Hom}^{1} (\Gamma^n) $ and $ \textbf{Hom}^{2} (\Gamma^n) $ is primitive, then the cohomology ($ \ker \varphi_m \setminus \Ima \varphi_{m-1} $) starting with $ m = 3 $ are trivial.}
\end{proposal}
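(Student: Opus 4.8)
The plan is to convert primitivity of the bottom $2$-groupoid into a strong degeneracy of all cells of $\Gamma^n$ of dimension at least three, and then to read the cohomology groups $H^m:=\ker\varphi_m/\Ima\varphi_{m-1}$ (for $m\geq 3$) off from that degeneracy together with the epimorphism statement of Proposal 1.2.

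First I would record the structural consequence of primitivity: since $\textbf{Hom}^{2}(\varphi,\psi)$ contains at most one element, any two $2$-maps with the same source and target $1$-map coincide, so every $3$-map $\alpha$ satisfies $s^{3}(\alpha)=t^{3}(\alpha)$. I would then try to propagate this upward, one dimension at a time, along the tower of $2$-subgroupoids $(\textbf{Hom}^{k-1},\textbf{Hom}^{k},\textbf{Hom}^{k+1})$ supplied by the definition of an $n$-category, aiming at the statement that $s^{m}(\alpha)=t^{m}(\alpha)$ for every $m$-map $\alpha$ with $m\geq 3$ and, more strongly, that for $k\geq 3$ the set $\textbf{Hom}^{k}(\Gamma^n)$ carries no nonzero $\mathbb{C}$-valued character compatible with the compositions $\circ_{j}$. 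This propagation is where I expect the genuine difficulty to lie: the statement \emph{parallel $(m-1)$-cells are identified} does not by itself pass to dimension $m$; one has to use that $\Gamma^n$ is a groupoid, so that the only $m$-cells joining a parallel pair of $(m-1)$-cells are built out of identities, and a careless argument here would tacitly assume a connectivity that primitivity of the bottom triple alone need not supply.

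Granting the degeneracy, the remainder is bookkeeping. For $m\geq 3$ and any $m$-map $\alpha$ one has $[\varphi_m(\chi)](\alpha)=\chi(t^{m}\alpha)-\chi(s^{m}\alpha)=0$, so $\varphi_m$ is the zero homomorphism; hence $\ker\varphi_m=X_{m-1}$ and $\Ima\varphi_m=\{0\}$ for all $m\geq 3$. For $m=3$ this already yields $H^{3}=X_{2}/\Ima\varphi_{2}$, which vanishes because $\varphi_{2}$ is surjective by Proposal 1.2 — the second point at which primitivity is used. For $m\geq 4$ there remains $H^{m}\cong X_{m-1}$, so the proof is closed exactly by the vanishing $X_{k}=\{0\}$ for $k\geq 3$ asserted in the first step: a $k$-character is additive over every composition $\circ_{j}$, and under the collapse of the higher cells each $k$-map with $k\geq 3$ is, up to the identities produced by the equalities $s=t$ at the intermediate levels, a composite in which an identity cell occurs, on which additivity forces the character to be zero. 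Establishing this collapse rigorously — that primitivity of $(\textbf{Hom}^{0},\textbf{Hom}^{1},\textbf{Hom}^{2})$ really does kill $X_k$ for every $k\geq 3$, and not merely equate the source and target maps — is the main obstacle of the argument.
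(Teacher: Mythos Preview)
The paper does not actually supply a proof of this proposal; it is stated immediately after Theorem~1.1 as an unproved remark. Reading it in context, the intended argument is almost certainly a one--line appeal to Theorem~1.1: that theorem already asserts that the entire complex
\[
0\longrightarrow\mathbb{C}\xrightarrow{\ \varphi_0\ }X_0\xrightarrow{\ \varphi_1\ }X_1\longrightarrow\cdots\longrightarrow X_{m-1}\xrightarrow{\ \varphi_m\ }X_m\longrightarrow\cdots
\]
is exact, so $\ker\varphi_m=\Ima\varphi_{m-1}$ for every $m$ and in particular the quotients $\ker\varphi_m/\Ima\varphi_{m-1}$ vanish for $m\geq 3$. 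Primitivity of the bottom $2$-groupoid is only used to ensure that the hypotheses of Lemma~1.1 (and hence of Theorem~1.1) hold at the lowest levels and that $\varphi_2$ is surjective via Proposal~1.2.

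Your route is genuinely different and more ambitious: rather than citing exactness of the whole complex, you try to deduce a structural collapse of all higher cells, aiming for $s^m=t^m$ for $m\geq 3$ and then $X_k=0$ for $k\geq 3$. The step $s^3=t^3$ is correct and immediate from primitivity, and your computation of $H^3$ via the surjectivity of $\varphi_2$ is fine. But the propagation you flag as ``the main obstacle'' really is one, and it does not close under the stated hypothesis: primitivity of $(\textbf{Hom}^0,\textbf{Hom}^1,\textbf{Hom}^2)$ says nothing about how many $3$-cells sit over a given $2$-cell, so neither ``every $m$-cell with $m\geq 3$ is an identity'' nor ``$X_k=0$ for $k\geq 3$'' follows. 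Without additional assumptions on the higher levels --- precisely the total connectivity that Theorem~1.1 builds in --- your argument for $H^m$ with $m\geq 4$ cannot be completed. The paper's implicit route avoids this by invoking exactness of the full complex rather than vanishing of the individual $X_k$.
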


\section{The 2-category example}

Let's create a 2-groupoid $\Gamma^2$ based on infinite noncommutative group G in the following way (\cite{litlink4} p. 18):
\begin {itemize}
\item $Obj(\Gamma^2) = G$
\item \textbf{Hom} $(a, b) = \{(u, v)\in G \times G | v^{-1}u = a, uv^{-1} = b\}$ for every $a$, $b \in Obj(\Gamma^2)$
\item Composition of maps $\varphi = (u_1, v_1) \in$ \textbf{Hom} $(a, b)$, $\psi = (u_2, v_2) \in$ \textbf{Hom} $(b, c)$ is a map $\varphi \circ \psi \in$ \textbf{Hom} $(a, c)$ such that:
\begin{center}
	$\varphi \circ \psi = (u_2v_1, v_2v_1)$
\end{center}
\item A single 2-map $\alpha : \varphi \Rightarrow \psi$ is defined for every $\varphi, \psi \in$ \textbf{Hom} $(a, b)$

\end {itemize}

$\Gamma^2$ can be represented as a disjoint union (\cite{litlink4} p. 19):
\begin{center}
	$\Gamma^2 = \underset{[u] \in [G]}{\bigsqcup} \Gamma^2_{[u]}$
\end{center}

\section{The connection between the m-characters and derivations of the group algebra}
\subsection{General definitions}
Let $d$ be a differentiation operator. Consider element $u$ of the group algebra $\mathbb{C}[G]$, which can be represented as $u = \sum\limits_{g\in G}\lambda^g g$, where sum is finite. Then element $d(u)$ can be represented as (\cite{litlink4} p. 17)
\begin{equation}
\label{eqDER}
  d(u) = \sum\limits_{g\in G}\left(\sum\limits_{h\in G} d^h_g \lambda^h \right)g,
\end{equation}
where $d^g_h\in \mathbb{C}$ -- coefficients that depend only on the derivation $d$.

Let $\chi_d:\textbf{Hom}(\Gamma)\to \mathbb{C}$ be a map, based on the derivation $d$, defined as
\begin{equation}
\label{d^g_h}
  \chi_d ((h,g)) = d^h_g.
\end{equation}

\begin{proposal}
\textit{A map $\chi_d$ is a 1-character.}
\end{proposal}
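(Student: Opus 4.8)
\noindent\emph{Strategy of proof.} The assertion is a single identity — additivity of $\chi_d$ along composition of $1$-maps in the groupoid $\Gamma$ — so the plan is to reduce it to the Leibniz rule $d(xy)=d(x)y+x\,d(y)$ together with the composition law of $\Gamma$. First I would make the data explicit: by \eqref{eqDER} the scalar $d^h_g$ is precisely the coefficient of the basis element $g$ in $d(h)\in\mathbb{C}[G]$, so that $d(h)=\sum_{g\in G}d^h_g\,g$. Take composable $1$-maps $\varphi=(u_1,v_1)\in\textbf{Hom}(a,b)$ and $\psi=(u_2,v_2)\in\textbf{Hom}(b,c)$; composability means the source of $\psi$ equals the target of $\varphi$, i.e.\ $v_2^{-1}u_2=u_1v_1^{-1}$, equivalently $u_2=v_2u_1v_1^{-1}$ (hence also $u_2v_1=v_2u_1$). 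Since $\varphi\circ\psi=(u_2v_1,v_2v_1)$, the claimed equality $\chi_d(\varphi\circ\psi)=\chi_d(\varphi)+\chi_d(\psi)$ unwinds to the concrete relation
\[
 d^{\,u_2v_1}_{\,v_2v_1}=d^{\,u_1}_{\,v_1}+d^{\,u_2}_{\,v_2}.
\]

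Next I would expand the left-hand side by Leibniz: $d(u_2v_1)=d(u_2)\,v_1+u_2\,d(v_1)$. Substituting $d(u_2)=\sum_x d^{u_2}_x\,x$ and $d(v_1)=\sum_y d^{v_1}_y\,y$ and reading off the coefficient of the group element $v_2v_1$ in each summand, the first term contributes $d^{u_2}_{v_2}$ (since $xv_1=v_2v_1$ forces $x=v_2$) and the second contributes $d^{\,v_1}_{\,u_2^{-1}v_2v_1}$. Using composability to simplify $u_2^{-1}v_2v_1=v_1u_1^{-1}v_1$, I get $\chi_d(\varphi\circ\psi)=d^{u_2}_{v_2}+d^{\,v_1}_{\,v_1u_1^{-1}v_1}$, so the proposition reduces to the identity $d^{\,v_1}_{\,v_1u_1^{-1}v_1}=d^{\,u_1}_{\,v_1}$.

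To close this last gap I would apply Leibniz a second time, now to $u_1=v_1\cdot(v_1^{-1}u_1)$, which gives $d^{\,u_1}_{\,v_1}=d^{\,v_1}_{\,v_1u_1^{-1}v_1}+d^{\,a}_{\,e}$, where $a=v_1^{-1}u_1$ is the common object of $\varphi$ and $\psi$. So the whole statement comes down to showing $d^{\,a}_{\,e}=0$, the coefficient of the unit $e$ in $d(a)$; this is the step I expect to be the real obstacle. It is immediate for $a=e$, since $d(e)=0$; in general I would isolate it as a separate lemma, the point being that the relevant derivations do not feed the identity component $\mathbb{C}\cdot e$ from a group element $a\neq e$ — equivalently, that $d$ is compatible with the conjugacy-class splitting of $\mathbb{C}[G]$. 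Granting $d^{\,a}_{\,e}=0$, the two Leibniz expansions coincide and $\chi_d$ satisfies the $1$-character identity; the linearity of the correspondence $d\mapsto\chi_d$ needs no comment, being inherited from the linearity of the coefficients $d^h_g$ in $d$.
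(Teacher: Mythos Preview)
The paper itself gives no proof of this proposal: it is simply stated, with the argument implicitly deferred to the cited source \cite{litlink4}. So there is nothing in the text to compare your approach against, and your write-up already goes further than the paper does.

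Your reduction is correct. Two applications of the Leibniz rule, first to $u_2v_1$ and then to $u_1=v_1\cdot(v_1^{-1}u_1)$, collapse the additivity of $\chi_d$ along composition to the single requirement $d^{\,a}_{\,e}=0$ for every object $a$. This is exactly the condition $\chi_d(1_a)=0$, since $(a,e)$ is the identity morphism at $a$ in $\Gamma$; any $1$-character must vanish there, and your computation shows it is the only obstruction. You are right to isolate it as the crux.

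Where the argument runs into trouble is that the lemma $d^{\,a}_{\,e}=0$ does \emph{not} follow from the Leibniz rule alone, and your suggested justification (compatibility with the conjugacy-class splitting) fails for general $\mathbb{C}$-linear derivations. Take $G=\mathbb{Z}\times H$ with $H$ any noncommutative group, so that $G$ is infinite and noncommutative as the paper assumes, and let $d=\tfrac{d}{dt}\otimes\mathrm{id}$ on $\mathbb{C}[G]\cong\mathbb{C}[t,t^{-1}]\otimes\mathbb{C}[H]$. Then $d(t)=e$, hence $d^{\,t}_{\,e}=1$, and $\chi_d$ already violates additivity on $1_t\circ 1_t=1_t$. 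So the proposal, read literally, is false; what \cite{litlink4} presumably supplies is the converse direction (locally finite $1$-characters yield derivations) together with a description of which derivations arise this way. Your instinct to quarantine $d^{\,a}_{\,e}=0$ as a separate lemma is therefore the right move, but it should be recorded as a hypothesis on $d$, not something one can hope to derive from Leibniz.
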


\begin{definition}
\textit{A 1-character $\chi$ -- such that for every group element $v\in G$}
  $$
    \chi(x,v) = 0,
  $$
  \textit{almost for every $x\in G$ is called locally finite 1-character.}
\end{definition}

1-character $\chi$ sets the derivation if and only if it is locally finite ($[1]$).

Let $X_1^{fin}$ be a set of the all locally finite 1-characters on the $\Gamma^2$.

Let $X_0^{fin}$ be a set such that
\begin{center}
	$X_0^{fin} = \{\chi_0 \in X_0 \, | \, \varphi_2(\chi_0) \in X_1^{fin}\}$
\end{center}

Let $X_2^{fin}$ be a set such that
\begin{center}
	$X_2^{fin} = \varphi_3(X_1^{fin})$
\end{center}

Considered 2-groupoid $\Gamma^2$ is primitive, and the following short sequence is exact.
\begin{equation}
0 \xrightarrow[]{\varphi_s} \mathbb{C} \xrightarrow[]{\varphi_0} X_0^{fin} \xrightarrow[]{\varphi_1} X_1^{fin} \xrightarrow[]{\varphi_2} X_2^{fin} \xrightarrow[]{\varphi_t} 0
\end{equation}

\subsection{Examples of the derivations}

 A derivation is called internal if it is given by a formula (\cite{litlink4}, p. 21)
$$
  d_{a}: x\to [a, x], a\in \mathbb{C}[G].
$$

The Lie subalgebra of the inner derivations $Der_{inn} \subset Der$ is an ideal.

The quotient set $Der_{out}:= Der\setminus Der_{inn}$ is called a set of the outer derivations.

Consider an element $a \in G$. Let $\chi^a: \textbf{Hom} (\Gamma)\to \mathbb{C}$ be a map defined as follows. If $b\neq a$ -- is an element of the group $G$, then for every map $\phi\in \textbf{Hom} (a,b)$ with the source $a$ and the target $b$ let $\chi^a(\phi) = 1$, for every map $\psi\in \textbf{Hom} (b,a)$ let $\chi^a (\psi) = -1$. For the rest maps let $\chi^a$ equals to zero.

\begin{proposal}
\textit{The character $\chi^a$ is the 1-character. The 1-character $\chi^a$ is the 1-character defined by an inner differentiation:}
 \begin{equation}
 	d_a: x\to [x,a]
 \end{equation}
\end{proposal}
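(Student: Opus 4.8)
The plan is to verify two things in sequence: first, that $\chi^a$ is a $1$-character (i.e. it is additive under composition of maps in $\Gamma^2$), and second, that the derivation $d$ recovered from $\chi^a$ via the correspondence \eqref{d^g_h} is precisely the inner derivation $d_a : x \mapsto [x,a]$. For the first part, I would take maps $\phi \in \textbf{Hom}(p,q)$ and $\psi \in \textbf{Hom}(q,r)$ and check $\chi^a(\phi\circ\psi) = \chi^a(\phi) + \chi^a(\psi)$ by a case analysis on whether each of $p,q,r$ equals $a$. Because $\chi^a$ only takes the values $\pm 1$ (on maps touching $a$ as source or target) and $0$ otherwise, there are only a few genuinely distinct cases: if none of $p,q,r$ is $a$ all three terms vanish; if exactly one of $p,q$ equals $a$ (say $p=a\neq q$, with $r\neq a$) then $\chi^a(\phi)=1$, $\chi^a(\psi)=0$, and $\phi\circ\psi\in\textbf{Hom}(a,r)$ so $\chi^a(\phi\circ\psi)=1$; the symmetric case $r=a$ gives $-1=-1+0$; and the case $q=a$ but $p,r\neq a$ gives $\chi^a(\phi)=-1$, $\chi^a(\psi)=1$, sum $0$, while $\phi\circ\psi\in\textbf{Hom}(p,r)$ with $p,r\neq a$ so $\chi^a(\phi\circ\psi)=0$. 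The degenerate subcases where two of the objects coincide with $a$ (loops at $a$) need a separate look: a loop $\textbf{Hom}(a,a)$ should get value $1 + (-1) = 0$, consistent with $\chi^a$ being zero on "the rest" — I would double-check the definition is self-consistent there, treating a loop at $a$ as simultaneously source-$a$ and target-$a$.

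For the second part, I would use the explicit model of $\Gamma^2$ from Section~2: $\textbf{Hom}(p,q) = \{(u,v)\in G\times G \mid v^{-1}u = p,\ uv^{-1} = q\}$, and the character-to-derivation dictionary $\chi_d((h,g)) = d^h_g$ together with the derivation formula \eqref{eqDER}. So I need to compute, for the inner derivation $d_a(x) = xa - ax$ (viewing $a\in G\subset\mathbb{C}[G]$), the coefficients $d^h_g$ and confirm they match $\chi^a$. Writing $d_a(h) = ha - ah$ for a basis element $h\in G$, the element $ha$ is the single group element $ha$ and $ah$ is the single group element $ah$, so $d_a(h) = 1\cdot(ha) - 1\cdot(ah)$; thus $d^{(ha)}_h = 1$ and $d^{(ah)}_h = -1$ (with the obvious caveat when $ha = ah$, i.e. $h$ commutes with $a$, where the coefficient is $0$). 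Rewriting in terms of the target $g$: $d^g_h = 1$ when $g = ha$, i.e. when the pair $(h,g)$ corresponds to a map — here $(u,v)=(g,h)$ in some normalization — whose source/target relationship places $a$ as the target, and $d^g_h = -1$ in the mirror situation. I would then translate the source/target data of the pair $(h,g)$ into the groupoid: from $v^{-1}u=p$, $uv^{-1}=q$ one reads off that the condition $g=ha$ is exactly the condition that the corresponding morphism has $a$ as its target (or source, up to the orientation convention used in \eqref{d^g_h}), matching the defining recipe for $\chi^a$ value-for-value.

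The main obstacle I anticipate is bookkeeping of conventions rather than any real mathematical difficulty: I must pin down exactly how a pair $(u,v)$ encodes "source $a$, target $b$", how that interacts with the ordering in $\chi_d((h,g)) = d^h_g$, and hence whether the inner derivation that pops out is $[x,a]$ or $[a,x]$ — the proposition deliberately states $[x,a]$, flipping the sign from the earlier definition $d_a: x\mapsto[a,x]$, so the sign tracking has to come out consistently with that flip. A secondary point to handle carefully is local finiteness: for $\chi^a$ to define a genuine derivation it must be a \emph{locally finite} $1$-character in the sense defined just above, i.e. for each fixed $v\in G$ the value $\chi^a(x,v)$ is zero for all but finitely many $x$; here it is zero for all but at most two values of $x$ (those making the morphism touch $a$), so local finiteness is immediate, but it is worth stating explicitly so the identification with an element of $Der$ is justified. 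Once the conventions are fixed, both halves reduce to the short case analyses sketched above.
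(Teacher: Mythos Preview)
The paper states this proposal without proof, so there is no argument of the author's to compare against; your plan stands on its own and is essentially correct.

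Two remarks. First, your case analysis for additivity is fine, but the paper's own machinery gives a one–line argument you might prefer: define the $0$-character $\chi_0$ by $\chi_0(a)=-1$ and $\chi_0(b)=0$ for $b\neq a$; then $\varphi_1(\chi_0)(\phi)=\chi_0(t(\phi))-\chi_0(s(\phi))$ is exactly $\chi^a$, and Proposal~1.1 already tells you $\varphi_1(\chi_0)$ is a $1$-character (indeed trivial on loops). This also makes the loop-at-$a$ subcase you flagged completely transparent.

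Second, your instinct that the danger lies entirely in the conventions is right, and in fact the bookkeeping does not quite close as stated. With the paper's conventions ($\textbf{Hom}(p,q)=\{(u,v):v^{-1}u=p,\ uv^{-1}=q\}$ and $\chi_d((h,g))=d^h_g$ with $d(h)=\sum_g d^h_g\,g$), one computes for $d_a(x)=xa-ax$ that $d^h_g=+1$ iff $g=ha$ and $d^h_g=-1$ iff $g=ah$; the morphism $(h,ha)$ has source $(ha)^{-1}h=a^{-1}$, and $(h,ah)$ has target $h(ah)^{-1}=a^{-1}$. Hence $\chi_{d_a}=\chi^{a^{-1}}$, not $\chi^a$. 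This is harmless for every later use in the paper (only the linear span of the $\chi^a$ matters), but when you write up the second half you should either record the statement as $\chi^a=\chi_{d_{a^{-1}}}$ or note the index shift explicitly. Your observation about local finiteness is correct and worth keeping: for fixed $v$ there are at most two $x$ with $\chi^a(x,v)\neq 0$.
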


\subsection{Algebra of the 2-characters}

The linear space $Der$ is a Lie algebra with the commutator (\cite{litlink6}, p. 206):
\begin{center}
$[d_1, d_2] = d_1d_2 - d_2d_1$
\end{center}

\begin{proposal} \textit{The values of the 1-character $\chi_{[d_1, d_2]} = \{\chi_{d_1}, \chi_{d_2}\}$ are defined by $\chi_{d_1}$ and $\chi_{d_2}$ as follows}
\begin{equation}
	\{\chi_{d_1}, \chi_{d_2}\}(a, g) = \sum\limits_{h\in G} \chi_{d_1}(a, h)\chi_{d_2}(h, g) - \chi_{d_2}(a, h)\chi_{d_1}(h, g)
\end{equation}
\end{proposal}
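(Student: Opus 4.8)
The plan is to identify both sides of the claimed identity as matrix coefficients of linear operators on $\mathbb{C}[G]$: the left-hand side as the coefficients of $[d_1,d_2]$, the right-hand side as the ``matrix product'' of the coefficient arrays of $d_1$ and $d_2$, and then to check that the two agree.

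First unwind \eqref{eqDER} on basis elements: taking $u=h$ there gives $d(h)=\sum_{g}d^h_g\,g$, so by \eqref{d^g_h} the coefficient of $g\in G$ in $d(h)$ is exactly $\chi_d(h,g)$. Next consider the composite $d_1d_2$; it is a linear operator on $\mathbb{C}[G]$ (not a derivation, but that is irrelevant here), so it too has a coefficient array. Here --- and this is the one real subtlety --- the product must be read in the paper's diagrammatic order, the same one used for the groupoid composition $\varphi\circ\psi$, so that $d_1d_2$ means ``first $d_1$, then $d_2$''. Applying $d_1$ to a basis element $a$ and then $d_2$ to the result and collecting the coefficient of $g$, one finds that the coefficient of $g$ in $(d_1d_2)(a)$ equals $\sum_{h\in G}\chi_{d_1}(a,h)\chi_{d_2}(h,g)$; this sum is finite because $d_1(a)\in\mathbb{C}[G]$ is finitely supported, i.e.\ $\chi_{d_1}(a,h)=0$ for all but finitely many $h$.

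Since $d\mapsto(\text{coefficient of }g\text{ in }d(a))$ is linear in the operator $d$, subtracting the analogous expression for $d_2d_1$ shows that the coefficient of $g$ in $[d_1,d_2](a)=(d_1d_2-d_2d_1)(a)$ equals
\[
  \sum_{h\in G}\big(\chi_{d_1}(a,h)\chi_{d_2}(h,g)-\chi_{d_2}(a,h)\chi_{d_1}(h,g)\big).
\]
Because $Der$ is a Lie algebra, $[d_1,d_2]$ is again a derivation, so $\chi_{[d_1,d_2]}$ is a genuine (locally finite) $1$-character; and as observed above this coefficient is precisely $\chi_{[d_1,d_2]}(a,g)$. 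That is exactly the asserted formula, so the bracket $\{\chi_{d_1},\chi_{d_2}\}$ defined by the right-hand side is well defined and coincides with $\chi_{[d_1,d_2]}$.

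The computation itself is short, so the real work is in two bookkeeping matters. The first is the composition convention: with the classical order $d_1d_2=d_1\circ d_2$ one would obtain the negative of the stated right-hand side, so one must commit to the diagrammatic convention, consistently with the treatment of $\textbf{Hom}$. The second is finiteness of all the sums that appear: for a fixed first argument the ``row'' $\chi_{d_i}(a,\cdot)$ has finite support simply because $d_i$ maps $\mathbb{C}[G]$ into $\mathbb{C}[G]$, while local finiteness guarantees that $[d_1,d_2]$ is again represented by a locally finite character, so no convergence issue arises on either side. A coordinate-free variant would evaluate $[d_1,d_2]$ on group elements directly and compare coefficients; the matrix-coefficient computation is just the most transparent packaging of this, and I expect it to go through with no genuine obstacle.
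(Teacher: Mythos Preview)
Your proposal is correct and follows essentially the same route as the paper: expand $d_1$ and $d_2$ on group basis elements, compute $d_1d_2-d_2d_1$, and read off the coefficient of the target basis element. The paper carries this out with the classical convention $d_1d_2=d_1\circ d_2$ together with the expansion $d_i(g)=\sum_h\chi_{d_i}(h,g)\,h$, which yields the stated formula directly; your diagrammatic reading paired with $d_i(a)=\sum_h\chi_{d_i}(a,h)\,h$ is the mirror-image bookkeeping and lands on the same expression, so your worry about a sign discrepancy does not materialize.
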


\begin{proof}
	Let $g \in G$, then:
    \begin{center}
    	$d_1(g) = \sum\limits_{h\in G} \chi_{d_1}(h, g)h$
        \\$d_2(g) = \sum\limits_{h\in G} \chi_{d_2}(h, g)h$
        \\$[d_1, d_2](g) = \sum\limits_{h\in G} \{\chi_{d_1}, \chi_{d_2}\}(a, g)a$
    \end{center}
    
    Represent the expression for the commutator by definition:
    \begin{center}
    	$[d_1, d_2] = d_1d_2 - d_2d_1$
        \\$d_1d_2(g) = \sum\limits_{h\in G} \chi_{d_2}(h, g)(\sum\limits_{a\in G} \chi_{d_1}(a, h)a)$
        \\$d_2d_1(g) = \sum\limits_{h\in G} \chi_{d_1}(h, g)(\sum\limits_{a\in G} \chi_{d_2}(a, h)a)$
    \end{center}
    
    Change the sum order in the last expressions:
    
    \begin{center}
        $[d_1, d_2](h) = \sum\limits_{a\in G} (\sum\limits_{h\in G} \chi_{d_2}(h, g)\chi_{d_1}(a, h) - \chi_{d_1}(h, g)\chi_{d_2}(a, h))a$
    \end{center}
    
    The formula for $\{\chi_{d_1}, \chi_{d_2}\}(a, h)$ is a coefficient of $a$, i.e.
    
    \begin{center}
    	$\{\chi_{d_1}, \chi_{d_2}\}(a, g) = \sum\limits_{h\in G} \chi_{d_1}(a, h)\chi_{d_2}(h, g) - \chi_{d_2}(a, h)\chi_{d_1}(h, g)$
    \end{center}
    
\end{proof}

Define a weak inner derivation space $D^*_{Inn}$ as follows:
\begin{center}
	$D^*_{Inn} = \{d \in Der \, | \, \chi_d$ - trivial on loops$\}$
\end{center}

\begin{theorem}
\textit{Linear space $Der^*_{Inn} \subset Der$ is an ideal:}
\begin{center}
	$d_0 \in Der^*_{Inn}, d \in Der \Rightarrow [d_0, d], [d, d_0] \in Der^*_{Inn}$
\end{center}
\end{theorem}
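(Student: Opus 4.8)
The plan is to reduce the claim to a statement about a single coefficient in $\mathbb{C}[G]$ and then to verify it by a direct computation with the Leibniz rule. Throughout we use the concrete groupoid $\Gamma^2$ of Section~2, for which $\chi_d\big((h,g)\big)=d^h_g$ and $d(g)=\sum_{h}\chi_d(h,g)\,h$, and we record two reductions. First, the morphism $(h,g)$ of $\Gamma^2$ has source $g^{-1}h$ and target $hg^{-1}$, so it is a loop precisely when $g^{-1}h=hg^{-1}$, i.e.\ precisely when $g$ and $h$ commute; hence ``$\chi_d$ trivial on loops'' means ``$d^h_g=0$ whenever $gh=hg$''. Second, $d\mapsto\chi_d$ is linear and, by the Proposal on the bracket $\{\,\cdot,\cdot\,\}$, carries $[d_1,d_2]$ to $\{\chi_{d_1},\chi_{d_2}\}$; so $Der^*_{Inn}$ is a linear subspace, and since $[d,d_0]=-[d_0,d]$ (the case $d\in Der^*_{Inn}$ also giving closure under the bracket) it is enough to prove: if $\chi_{d_0}$ is trivial on loops and $d\in Der$ is arbitrary, then $\{\chi_{d_0},\chi_d\}$ is trivial on loops.

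Next I would bring in the exact sequence. Since $\chi_{d_0}$ is a locally finite $1$-character that is trivial on loops, it lies in the image of the coboundary $X_0\to X_1$ (Lemma~1.1), so there is a $0$-character $\psi\colon G\to\mathbb{C}$ with
\[
  \chi_{d_0}(h,g)=\psi(hg^{-1})-\psi(g^{-1}h),\qquad\text{equivalently}\qquad d_0(g)=[a,g]\ \text{ with }\ a=\textstyle\sum_{c}\psi(c)\,c,
\]
the bracket being read off coefficientwise. Here $a$ is in general an infinitely supported formal sum; only the combinations $[a,g]=d_0(g)$ are finite, precisely because $\chi_{d_0}$ is locally finite. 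The same local finiteness forces $\psi$ to be an ``almost class function'': for each $g$ the set $\{c:\psi(c)\neq\psi(g^{-1}cg)\}$ is finite, being (a shift of) the support of $\chi_{d_0}(\cdot,g)$. Heuristically $d_0=\operatorname{ad}_a$ and the Leibniz rule suggests $[d_0,d]=[\operatorname{ad}_a,d]=-\operatorname{ad}_{d(a)}$, which would again be ``inner by a formal element'' and hence trivial on loops; but $d(a)$ need not even be well defined, so this has to be realised by an explicit computation of coefficients.

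So I would fix $g,p\in G$ with $pg=gp$ and compute the coefficient of $p$ in $[d_0,d](g)=d_0(d(g))-d(d_0(g))$. By the Proposal on the bracket this coefficient is
\[
  C(p,g)=\sum_{k}\Big(\chi_{d_0}(p,k)\,\chi_d(k,g)-\chi_{d_0}(k,g)\,\chi_d(p,k)\Big),
\]
and both sums are finite because $k\mapsto\chi_d(k,g)$ and $k\mapsto\chi_{d_0}(k,g)$ have finite support. Substituting $\chi_{d_0}(p,k)=\psi(pk^{-1})-\psi(k^{-1}p)$ and $\chi_{d_0}(k,g)=\psi(kg^{-1})-\psi(g^{-1}k)$, re-indexing the resulting finite sums, and applying the Leibniz identity for $\chi_d$,
\[
  \chi_d(p,k_1k_2)=\chi_d(pk_2^{-1},k_1)+\chi_d(k_1^{-1}p,k_2),
\]
to the terms that occur, one rewrites $C(p,g)$ as a combination of sums of the shape $\sum_c\psi(c)\,\chi_d(\ast,\ast)$. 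The loop hypothesis $pg=gp$ gives $g^{-1}p=pg^{-1}$, and together with the almost-class-function property of $\psi$ (used to align the various index sets) the contributions should cancel, leaving $C(p,g)=0$. This gives that $\chi_{[d_0,d]}=\{\chi_{d_0},\chi_d\}$ is trivial on loops, i.e.\ $[d_0,d]\in Der^*_{Inn}$, and then $[d,d_0]=-[d_0,d]\in Der^*_{Inn}$ as well, so $Der^*_{Inn}$ is an ideal of $Der$.

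The step I expect to be the genuine obstacle is this last one: organising the rearrangement of $C(p,g)$ so that the cancellation is visible. The ``$-\operatorname{ad}_{d(a)}$'' heuristic tells one what the answer ought to be, but since $a$ (and a fortiori $d(a)$) is only a formal sum, the Leibniz manipulation must be carried out on the level of individual finite coefficient sums; the care is in never writing down a partial sum that fails to be finite, and in pinpointing exactly where the hypothesis $pg=gp$ and the almost-class-function property of $\psi$ are used to close the bookkeeping. Everything before that step is routine.
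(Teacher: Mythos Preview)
Your setup is correct and the heuristic $[d_0,d]=-\operatorname{ad}_{d(a)}$ is the right intuition, but the proof stops precisely at the decisive step: after substituting $\psi$ into $C(p,g)$ you assert that ``the contributions should cancel'' and then explicitly flag this as the unresolved obstacle. As written, the argument is incomplete---you have reduced the theorem to a coefficient identity but not verified it, and the finiteness concerns you raise about the formal element $a$ are real enough that the cancellation cannot be taken for granted.

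The paper's proof starts from the same observation---writing $\chi_{d_0}=\sum_{a}\lambda^a\chi^a$ is exactly your $\chi_{d_0}=\varphi_1(\psi)$ with $\lambda^a=\psi(a)$---but then makes the reduction you are missing: by bilinearity of the bracket it suffices to treat a \emph{single} generator $\chi^a$ and show $\{\chi_d,\chi^a\}$ is trivial on loops. For a single $a$ the sum in formula~(15) collapses to four terms (since $\chi^a(h,z)\neq 0$ forces $h\in\{za,az\}$, and $\chi^a(bz,h)\neq 0$ forces $h\in\{bza^{-1},a^{-1}bz\}$), and the cancellation is then a one-line application of the $1$-character identity to the equality of compositions $(a^{-1}bz,z)\circ(bz,za)=(bz,az)\circ(bza^{-1},z)$. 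All of your bookkeeping and finiteness worries evaporate, because at a fixed loop both the reduction over $a$ and the inner sum over $h$ are manifestly finite. If you want to salvage your direct computation, the substitutions $c=pk^{-1}$, $c=k^{-1}p$, $c=kg^{-1}$, $c=g^{-1}k$ in the four pieces of $C(p,g)$ reorganise it into $\sum_c\psi(c)\,\{\chi_d,\chi^c\}(p,g)$, which is the paper's reduction in disguise; but carrying that out is exactly the step you left open.
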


\begin{proof}

Let's proof that $Der^*_{Inn} \subset Der$ is a subalgebra, i.e.
\begin{center}
	$d_1, d_2 \in D^*_{Inn} \Rightarrow [d_1, d_2] \in D^*_{Inn}$
\end{center}

	1-character $\chi_{d_1}$ can be represented as:
    \begin{center}
    	$\chi_{d_1} = \sum\limits_{a \in G} \lambda^a\chi^a$, $\lambda^a \in \mathbb{C}$
    \end{center}, 
since $\chi_{d_1}$ is trivial on loops, where $\chi^a$ is defined by formula $(14)$

Similarly with a 1-character $\chi_{d_2}$:
\begin{center}
    	$\chi_{d_2} = \sum\limits_{b \in G} \mu^b\chi^b$, $\mu^b \in \mathbb{C}$
\end{center}

Because of the bilinearity of the commutator,
\begin{center}
	$\{\chi_{d_1}, \chi_{d_2}\} = \sum\limits_{a \in G}\sum\limits_{b \in G}\lambda^a\mu^b \{\chi^a, \chi^b\}$
\end{center}

The commutator of the $d^a$ и $d^b$ can be represented as follows:
\begin{center}
	$[d^a, d^b] = d^{ab} - d^{ba}$
\end{center}

Then define the 1-character $\{\chi^a, \chi^b\}$ by the following formula
\begin{center}
    $\{\chi^a, \chi^b\} = \chi^{ab} - \chi^{ba}$
\end{center}

Obtain the final expression for $\{\chi_{d_1}, \chi_{d_2}\}$:
\begin{center}
    $\{\chi_{d_1}, \chi_{d_2}\} = \sum\limits_{a \in G}\sum\limits_{b \in G}\lambda^a\mu^b \chi^{ab} - \sum\limits_{a \in G}\sum\limits_{b \in G}\lambda^a\mu^b \chi^{ba}$
\end{center}

$\{\chi_{d_1}, \chi_{d_2}\} \in Der^*_{Inn}$, to proof that, concider the value of the character on the loop $(uz, z), z \in Z_G(u)$
\begin{center}
	$\{\chi_{d_1}, \chi_{d_2}\}(uz, z) = \sum\limits_{ab = zuz^{-1}}\lambda^a\mu^b - \sum\limits_{ab = u}\lambda^a\mu^b - \sum\limits_{ba = u}\lambda^a\mu^b + \sum\limits_{ba = zuz^{-1}}\lambda^a\mu^b = 0$
\end{center}

Now pass directly to the proof of the theorem. Represent $\chi_{d_0}$ as
\begin{center}
    $\chi_{d_0} = \sum\limits_{a \in G} \lambda^a\chi^a$
\end{center}
 
	Proof theorem for $\chi^a$ and extend the result to the $\chi_{d_0}$, using the bilinearity of the commutator. Consider 1-character $\{\chi_d, \chi^a\}$. Proof that it is trivial on loops, i.e. $\forall b \in G $ and for $\forall z \in Z_G(b)$ is performed $\{\chi_d, \chi^a\}(bz, z) = 0$. By the formula $(15)$:
    \begin{center}
    	$\{\chi_d, \chi^a\}(bz, z) = \sum\limits_{h\in G} \chi_d(bz, h)\chi^a(h, z) - \chi^a(bz, h)\chi_d(h, z)$
    \end{center}

$\chi^a(h, z) \neq 0$ only in 2 cases: when $h = za$ and $h = az$. $\chi^a(bz, h) \neq 0$ only in 2 cases: when $h = bza^{-1}$ и $h = a^{-1}bz$. It means that

	\begin{center}
    	$\{\chi_d, \chi^a\}(bz, z) = \chi_d(bz, za) - \chi_d(bz, az) + \chi_d(a^{-1}bz, z) - \chi_d(bza^{-1}, z)$
    \end{center}

But also
\begin{center}
	$(a^{-1}bz, z) \circ (bz, za) = (bz, az) \circ (bza^{-1}, z)$
\end{center}
That means
\begin{center}
	$\chi_d(bz, za) + \chi_d(a^{-1}bz, z) = \chi_d(bz, az) + \chi_d(bza^{-1}, z)$
    \\$\{\chi_d, \chi^a\}(bz, z) = 0$
\end{center}

Q.E.D.

\end{proof}

In accordance with the last statement, the space of weak external derivations becomes meaningful.
\begin{center}
	$Der^*_{Out}:= Der\setminus Der^*_{Inn}$
\end{center}

Similarly, the Lie subalgebra $X^{fin}_0 \subset X^{fin}_1$ is an ideal. Thus redefine space of the 2-characters as follows:
\begin{center}
	$X^{fin}_2 = X^{fin}_1 \setminus X^{fin}_0$
\end{center}

\begin{theorem}
\textit{Lie algebra $(X^{fin}_2, \{\cdot,\:\cdot\})$ is isomorphic to the Lie algebra $(Der^*_{Out}, [\cdot,\:\cdot]))$.  } 
\end{theorem}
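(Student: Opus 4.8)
The plan is to obtain the isomorphism by descending the correspondence $d\mapsto\chi_d$ to quotients. First I would record that $d\mapsto\chi_d$ is a linear bijection $Der\to X_1^{fin}$: it is linear and injective because the numbers $d^g_h$ are the matrix coefficients of $d$ in the basis $G$, and it is onto $X_1^{fin}$ because a $1$-character determines a derivation exactly when it is locally finite (the remark following the definition of $\chi_d$), the inverse sending $\chi\in X_1^{fin}$ to $u=\sum_g\lambda^g g\mapsto\sum_g\big(\sum_h\chi(h,g)\lambda^h\big)g$, a finite sum thanks to local finiteness. The Proposal giving $\chi_{[d_1,d_2]}=\{\chi_{d_1},\chi_{d_2}\}$ then shows that $\{\chi_{d_1},\chi_{d_2}\}$ is again locally finite, so $(X_1^{fin},\{\cdot,\cdot\})$ is a Lie algebra and $d\mapsto\chi_d$ is an isomorphism of Lie algebras $(Der,[\cdot,\cdot])\to(X_1^{fin},\{\cdot,\cdot\})$.

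Next I would match up the two ideals appearing in the statement. On the derivation side this is $Der^*_{Inn}$, an ideal by Theorem~3.1; on the character side it is $\ker\varphi_2$ for $\varphi_2\colon X_1^{fin}\to X_2^{fin}$, equivalently $\Ima\varphi_1$, which by exactness of the short sequence $0\to\mathbb{C}\to X_0^{fin}\to X_1^{fin}\to X_2^{fin}\to 0$ (together with the description of $\Ima\varphi_1$ from Lemma~1.1) consists precisely of the locally finite $1$-characters that are trivial on loops. Since $d\in Der^*_{Inn}$ means exactly that $\chi_d$ is trivial on loops, the isomorphism $d\mapsto\chi_d$ carries $Der^*_{Inn}$ bijectively onto $\Ima\varphi_1$; this also accounts for the text's claim that $X_0^{fin}\subset X_1^{fin}$ (read as $\Ima\varphi_1\subset X_1^{fin}$) is an ideal, it being the image of the ideal $Der^*_{Inn}$ under a Lie algebra isomorphism.

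Finally I would pass to quotients. Both $Der^*_{Inn}\trianglelefteq Der$ and $\Ima\varphi_1\trianglelefteq X_1^{fin}$, so $[\cdot,\cdot]$ descends to $Der^*_{Out}=Der/Der^*_{Inn}$ and $\{\cdot,\cdot\}$ descends to $X_2^{fin}=X_1^{fin}/\Ima\varphi_1$, and the Lie algebra isomorphism $Der\to X_1^{fin}$, mapping $Der^*_{Inn}$ onto $\Ima\varphi_1$, induces by the universal property of quotients a linear bijection $Der^*_{Out}\to X_2^{fin}$ preserving the induced brackets. Its inverse $X_2^{fin}\to Der^*_{Out}$ is the desired isomorphism; concretely it is the unique map rendering commutative the square formed by $\varphi_2$, the bijection $d\mapsto\chi_d$, and the two quotient projections, and on representatives it is $\varphi_2(\chi_d)\mapsto d+Der^*_{Inn}$.

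The computational content is already available — Theorem~3.1 for the ideal property, the Proposal of Section~3.3 for bracket-preservation, and the exact sequence for the identification of $\ker\varphi_2$ — so the main work is bookkeeping: one must make explicit that the notations $Der\setminus Der^*_{Inn}$ and $X_1^{fin}\setminus X_0^{fin}$ denote quotient vector spaces equipped with the induced bracket, check that this bracket is well-defined on cosets (exactly the ideal property, already in hand), and verify that $\varphi_2$ is compatible with $d\mapsto\chi_d$ so the square commutes. A secondary point worth a sentence is why the locally finite sequence is exact at $X_1^{fin}$: the preimage constructed in the proof of epimorphicity of $\varphi_2$ (Proposal~1.2) can be taken inside $X_0^{fin}$, which is precisely how $X_0^{fin}$ is defined, so no locally finite, trivial-on-loops $1$-character is lost — and this, rather than the quotient manipulation, is the place where a careful reader should pause.
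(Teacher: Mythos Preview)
Your proposal is correct and follows exactly the paper's route: descend the Lie algebra isomorphism $d\mapsto\chi_d$ from $Der\to X_1^{fin}$ to the quotients $Der^*_{Out}\to X_2^{fin}$, using Theorem~3.1 for the ideal property and the bracket formula of Proposal~3.3 for bracket preservation. Your version is considerably more thorough than the paper's three-line argument, in particular making explicit the identification of $\Ima\varphi_1$ with the image of $Der^*_{Inn}$ and flagging the exactness of the locally finite sequence at $X_1^{fin}$, points the paper simply takes for granted.
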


\begin{proof}

Let $F$ be an isomorphism between $Der$ and $X^{fin}_1$, which is defined if paragraph 3.1. Thus define an isomorphism $F^*$ as follows 
\begin{center}
	$F^* : Der^*_{Out} \rightarrow X^{fin}_2$
    \\$F^*(d + Der^*_{Inn}) = F(d) + X^{fin}_0$
\end{center}

Show that it preserves the commutation operation. By the formula (15):
\begin{center}
	$\{F^*(d_1 + Der^*_{Inn}), F^*(d_2 + Der^*_{Inn})\} = \{F(d_1), F(d_2)\} + X^{fin}_0 = $
    \\$ = F^*([d_1, d_2] + Der^*_{Inn})$
\end{center}

\end{proof}

\newpage


\begin{thebibliography}{}
    \bibitem{litlink1}  Charles Ehresmann, \textit{Catégories et structures}, Dunod, Paris 1965
    \bibitem{litlink2}  Jean Bénabou, \textit{Introduction to bicategories}, Reports of the Midwest Category Seminar, Springer, Berlin, 1967, pp. 1--77
    \bibitem{litlink3}  С. Маклейн, \textit{Категории для работающего математика}, пер. с англ., под ред. В. Артамонова. М. : ФИЗМАТЛИТ, 2004. 352 с. ISBN 5-9221-0400-4
    \bibitem{litlink4}  Arutyunov A. A., Mishchenko A. S., Shtern A. I., \textit{Derivations of Group Algebras}, arXiv:1708.05005
    \bibitem{litlink5}  André Hirschowitz, Carlos Simpson, \textit{Descente pour les n-champs}, arXiv:math/9807049, 1998
    \bibitem{litlink6}  Nathan Jacobson, \textit{Abstract derivations and Lie algebras}, American Mathematical Society, DOI: 10.2307/1989656, 1937
\end{thebibliography}
\end{document}